\title{The $q$-multiple gamma functions of Barnes-Milnor type}
\author{Hanamichi Kawamura}
\begin{document}
\newtheorem{Thm}{Theorem}[section]
\newtheorem{Def}[Thm]{Definition}
\newtheorem{Lem}[Thm]{Lemma}
\newtheorem{Prop}[Thm]{Proposition}
\newtheorem{Cor}[Thm]{Corollary}
\begin{abstract}
The multiple gamma functions of BM (Barnes-Milnor) type and the $q$-multiple gamma functions have been studied independently. In this paper, we introduce a new generalization of the multiple gamma functions called the $q$-BM multiple gamma function including those functions and prove some properties the BM multiple gamma functions satisfy for them. 
\end{abstract}
\maketitle
\section{Introduction}
The $q$-gamma function has been studied since Barnes \cite{ja}. Jackson defined it as the reciprocal of the $q$-Pochhammer symbol, which satisfies the periodicity and some transformation properties like the usual gamma functions. Almost 90 years later, Kurokawa \cite{kogi} introduced the $q$-multiple gamma and sine function. This generalization is also derived from the multiple gamma functions defined by Barnes \cite{barnes}. In \cite{kogi}, some properties, such as the multiplication formula and the periodicity, were proved. A bit later, Tanaka \cite{tanaka} pointed out that the $q$-multiple gamma functions can be regarded as Appell's O-function.

Let $w,\omega_1,\omega_2,\cdots,\omega_r$ be complex numbers with positive real parts. Barnes' multiple gamma functions are defined by
\begin{eqnarray*}\displaystyle\Gamma_r(w;{\boldsymbol{\omega}})=\exp\left(\left.\frac{\partial}{\partial s}\zeta_r(s,w;{\boldsymbol{\omega}})\right|_{s=0}\right),\end{eqnarray*}
where $\zeta_r$ are the multiple Hurwitz' zeta functions
\begin{eqnarray*}\displaystyle\zeta_r(s,w;{\boldsymbol{\omega}})=\sum_{{\bf n}\geq{\bf 0}} ({\bf n}\cdot{\boldsymbol{\omega}}+w)^{-s}\qquad{(\mathrm{Re(s)>r})},\end{eqnarray*}
${\bf 0}=(0,\cdots,0)$, ${\bf n}=(n_1,\cdots,n_r)$, ${\boldsymbol{\omega}}=(\omega_1,\cdots,\omega_r)$, $\boldsymbol{n}\geq\boldsymbol{0}\overset{\mathrm{def}}{\Leftrightarrow}n_i\geq{0}\,(i=1,\cdots,r)$ and ${\bf n}\cdot{\boldsymbol{\omega}}=n_1\omega_1+\cdots+n_r\omega_r$. We use similar notation for other vectors. Kurokawa's definition of the $q$-multiple gamma functions is
\begin{eqnarray*}\displaystyle\Gamma_r^q(w;{\boldsymbol{\omega}})=\frac{\Gamma_{r+1}(w;({\boldsymbol{\omega}},\tau'))\Gamma_{r+1}(w;({\boldsymbol{\omega}},-\tau'))}{\Gamma_r(w;{\boldsymbol{\omega}})},\end{eqnarray*}
where $q=\exp(-2\pi i/\tau')$.

In 2007, Kurokawa and Ochiai \cite{ki} constructed the theory of the multiple gamma functions of BM (Barnes-Milnor) type:
\begin{eqnarray*}\displaystyle\Gamma_{r,k}(w;{\boldsymbol{\omega}})=\exp\left(\left.\frac{\partial}{\partial s}\zeta_r(s,w;{\boldsymbol{\omega}})\right|_{s=-k}\right).\end{eqnarray*}

The case of $k=0$ is the Barnes' multiple gamma functions $\Gamma_r$. The parameters $r$ and $k$ are usually called ``order" and ``depth" respectively. Their generalization of Kinkelin's formula enables the function $\Gamma_{r,k}$ to bring down its depth by integration:
\\
\begin{Thm}[\cite{ki}, Theorem 4]
For $k\geq{1}$, we have
\begin{eqnarray*}\displaystyle\int_0^w \left(\log\Gamma_{r,k-1}(t;{\boldsymbol{\omega}})-\frac{1}{k}\zeta_r(1-k,t;{\boldsymbol{\omega}})\right)\,dt=\frac{1}{k}\log\frac{\Gamma_{r,k}(w;{\boldsymbol{\omega}})}{\Gamma_{r,k}(0;{\boldsymbol{\omega}})}.\end{eqnarray*}
\end{Thm}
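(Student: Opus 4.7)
The plan is to express both sides as derivatives at $s=1-k$ of a single meromorphic function of $s$, namely the integral $F(s):=\int_0^w\zeta_r(s,t;\boldsymbol{\omega})\,dt$, and then read off the identity from Taylor-expanding $F$ at that point.

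The starting point is the shift relation $\frac{\partial}{\partial t}\zeta_r(s,t;\boldsymbol{\omega})=-s\,\zeta_r(s+1,t;\boldsymbol{\omega})$, which is obvious by term-by-term differentiation when $\operatorname{Re}(s)>r$ and extends by analytic continuation. Integrating in $t$ from $0$ to $w$ and shifting the index $s\mapsto s-1$ gives, for $s\neq 1$,
\begin{eqnarray*}
F(s)\;=\;\int_0^w\zeta_r(s,t;\boldsymbol{\omega})\,dt\;=\;\frac{\zeta_r(s-1,0;\boldsymbol{\omega})-\zeta_r(s-1,w;\boldsymbol{\omega})}{s-1}.
\end{eqnarray*}
Since $s=1-k\leq 0$ is far from the poles of $\zeta_r(s-1,\cdot;\boldsymbol{\omega})$, this identity holds in a neighbourhood of $s=1-k$, and differentiation in $s$ commutes with the integral there.

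Next I would differentiate in $s$ and evaluate at $s=1-k$. Writing $H(s):=\zeta_r(s-1,0;\boldsymbol{\omega})-\zeta_r(s-1,w;\boldsymbol{\omega})$, the quotient rule gives
\begin{eqnarray*}
F'(1-k)\;=\;\frac{-k\,H'(1-k)-H(1-k)}{k^2}.
\end{eqnarray*}
By the chain rule, $H'(1-k)=\partial_u\zeta_r(u,0;\boldsymbol{\omega})\big|_{u=-k}-\partial_u\zeta_r(u,w;\boldsymbol{\omega})\big|_{u=-k}=\log\Gamma_{r,k}(0;\boldsymbol{\omega})-\log\Gamma_{r,k}(w;\boldsymbol{\omega})$, while $H(1-k)=\zeta_r(-k,0;\boldsymbol{\omega})-\zeta_r(-k,w;\boldsymbol{\omega})$. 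On the other hand, the left-hand side of the theorem equals $F'(1-k)-\frac{1}{k}\int_0^w\zeta_r(1-k,t;\boldsymbol{\omega})\,dt$ by the definition of $\log\Gamma_{r,k-1}$ and the legitimacy of swapping $\partial_s$ with the $t$-integral (the integrand is jointly continuous and the $t$-interval is compact).

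Finally, the leftover integral is computed by the same shift identity: specialising $\int_0^w\zeta_r(s+1,t;\boldsymbol{\omega})\,dt=\bigl(\zeta_r(s,0;\boldsymbol{\omega})-\zeta_r(s,w;\boldsymbol{\omega})\bigr)/s$ at $s=-k$ yields $\frac{1}{k}\int_0^w\zeta_r(1-k,t;\boldsymbol{\omega})\,dt=H(1-k)/k^2$, which cancels precisely the $H(1-k)/k^2$ term in $F'(1-k)$ and leaves $\frac{1}{k}\log\bigl(\Gamma_{r,k}(w;\boldsymbol{\omega})/\Gamma_{r,k}(0;\boldsymbol{\omega})\bigr)$, as desired. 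The only subtle step is the justification of the two interchanges (differentiation under the integral, and analytic continuation of the shift relation); both are routine because $s=1-k$ is a regular point of every function involved and the integrand is smooth in $t$, but this is where I would take the most care in writing up the argument.
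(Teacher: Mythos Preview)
The paper does not actually prove this statement: Theorem~1.1 is quoted from Kurokawa--Ochiai \cite{ki} as background, with no argument given. So there is no in-paper proof to compare against, and your approach has to be assessed on its own.

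Your strategy is the standard and correct one: use the differentiation identity $\partial_t\zeta_r(s,t;\boldsymbol{\omega})=-s\,\zeta_r(s+1,t;\boldsymbol{\omega})$, integrate in $t$, analytically continue, and differentiate in $s$ at $s=1-k$. The derivation of $F(s)=\dfrac{H(s)}{s-1}$ with $H(s)=\zeta_r(s-1,0;\boldsymbol{\omega})-\zeta_r(s-1,w;\boldsymbol{\omega})$ and the formula $F'(1-k)=\dfrac{-kH'(1-k)-H(1-k)}{k^2}$ are both correct, as is the identification $H'(1-k)=\log\Gamma_{r,k}(0;\boldsymbol{\omega})-\log\Gamma_{r,k}(w;\boldsymbol{\omega})$.

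There is, however, a sign slip in your last paragraph. Specialising $\int_0^w\zeta_r(s+1,t;\boldsymbol{\omega})\,dt=(\zeta_r(s,0;\boldsymbol{\omega})-\zeta_r(s,w;\boldsymbol{\omega}))/s$ at $s=-k$ gives
\[
\frac{1}{k}\int_0^w\zeta_r(1-k,t;\boldsymbol{\omega})\,dt=\frac{H(1-k)}{-k^2}=-\frac{H(1-k)}{k^2},
\]
not $+H(1-k)/k^2$. With the correct sign the cancellation does occur: the left-hand side of the theorem equals $F'(1-k)-\bigl(-H(1-k)/k^2\bigr)=-H'(1-k)/k$, which is $\frac{1}{k}\log\bigl(\Gamma_{r,k}(w;\boldsymbol{\omega})/\Gamma_{r,k}(0;\boldsymbol{\omega})\bigr)$. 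Taken literally, your stated sign would leave an extra $-2H(1-k)/k^2$ uncancelled, so fix that when you write it up. One further point worth a sentence in the write-up: the values $\zeta_r(s,0;\boldsymbol{\omega})$ and the integral starting at $t=0$ make sense because near $s=1-k\le 0$ the only possibly singular contribution is the $\mathbf{n}=\mathbf{0}$ term $t^{-s}$, which is integrable and vanishes at $t=0$ there; this is implicit in your ``regular point'' remark but deserves to be said explicitly.
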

Moreover, Kurokawa and Wakayama \cite{pd} investigated period deformations and a generalization of Raabe's integral formula for the BM multiple gamma functions. These results can be written as
\begin{eqnarray*}\displaystyle\lim_{\mu_j\rightarrow{\infty}\atop{1\leq{j}\leq{l}}} \frac{|{\boldsymbol{\alpha}}|_{\times}}{|{\boldsymbol{\mu}}|_{\times}}\log\Gamma_{r+l,k}\left(w;\left({\boldsymbol{\omega}},\frac{{\boldsymbol{\alpha}}}{{\boldsymbol{\mu}}}\right)\right)\qquad\qquad\\=\frac{(-1)^lk!}{(l+k)!}\biggl(\log\Gamma_{r,l+k}(w;{\boldsymbol{\omega}})\biggr.\qquad\qquad\qquad\qquad\qquad\end{eqnarray*}$$\displaystyle+\left.\left(\frac{1}{k+1}+\cdots+\frac{1}{k+l}\right)\zeta_r(-k-l,w;{\boldsymbol{\omega}})\right),$$
where $|{\boldsymbol{\omega}}|_{\times}=\prod_{i=1}^r \omega_i$. This theorem allows adjusting the order and the depth of $\Gamma^q_{r,k}$ by removing some parts of periods. Our purpose in this paper is finding the $q$-analogue of the above theorems and its applications. To prove them, we define the $q$-BM multiple gamma functions as follows:
\begin{eqnarray*}\displaystyle\Gamma_{r,k}^q(w;{\boldsymbol{\omega}})=\frac{\Gamma_{r+1,k}(w;({\boldsymbol{\omega}},\tau'))\Gamma_{r+1,k}(w;({\boldsymbol{\omega}},-\tau'))}{\Gamma_{r,k}(w;{\boldsymbol{\omega}})}.\end{eqnarray*}

At the first onset, we generalize Shibukawa's expression (\cite{shib}, Corollary 4.9):
\begin{Thm}
Let $\mathrm{Li}_s(x)=\sum_{n\geq{1}} x^n/n^s$ be the usual polylogarithm function. For $k\geq{0}$, we have
\begin{eqnarray*}\displaystyle\Gamma_{r,k}^q(w;{\boldsymbol{\omega}})=\exp\left(\frac{k!}{(\log q)^k}\sum_{{\bf n}\geq{\bf 0}}\mathrm{Li}_{k+1}(q^{{\bf n}\cdot{\boldsymbol{\omega}}+w})\right).\end{eqnarray*}
\end{Thm}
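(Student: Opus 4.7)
The plan is to compute $\log\Gamma^{q}_{r,k}(w;\boldsymbol{\omega})$ directly from its defining quotient. By that definition together with $\Gamma_{r,k}=\exp(\partial_{s}\zeta_{r}|_{s=-k})$, the logarithm equals $Z'(-k)$, where
$$Z(s):=\zeta_{r+1}(s,w;(\boldsymbol{\omega},\tau'))+\zeta_{r+1}(s,w;(\boldsymbol{\omega},-\tau'))-\zeta_{r}(s,w;\boldsymbol{\omega}).$$
The first step is to rewrite $Z(s)$ as a bilateral sum. Expanding the two $(r{+}1)$-fold zeta functions in the outermost variable $m\ge 0$ with periods $\pm\tau'$ and reindexing the $-\tau'$ branch via $m\mapsto -m$, the $m=0$ terms coincide and are compensated exactly by the subtracted $\zeta_{r}$, yielding
$$Z(s)=\sum_{\boldsymbol{n}\ge\boldsymbol{0}}\sum_{m\in\mathbb{Z}}(\boldsymbol{n}\cdot\boldsymbol{\omega}+w+m\tau')^{-s}.$$

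The second step is to apply the Lipschitz summation formula to the inner sum. For $\tau'$ in the upper half-plane (so that $|q|<1$) and $a=\boldsymbol{n}\cdot\boldsymbol{\omega}+w$ of positive real part, this gives
$$\sum_{m\in\mathbb{Z}}(a+m\tau')^{-s}=\frac{(2\pi i/\tau')^{s}}{\Gamma(s)}\,\mathrm{Li}_{1-s}(q^{a}),\qquad 2\pi i/\tau'=-\log q.$$
Factoring out the $\boldsymbol{n}$-independent prefactor yields
$$Z(s)=\frac{(2\pi i/\tau')^{s}}{\Gamma(s)}\sum_{\boldsymbol{n}\ge\boldsymbol{0}}\mathrm{Li}_{1-s}(q^{\boldsymbol{n}\cdot\boldsymbol{\omega}+w}).$$
Differentiating at $s=-k$ is then straightforward: since $1/\Gamma(s)$ has a simple zero at $s=-k$ with derivative $(-1)^{k}k!$, the Leibniz rule collapses the product to
$$Z'(-k)=(-1)^{k}k!\,(2\pi i/\tau')^{-k}\sum_{\boldsymbol{n}\ge\boldsymbol{0}}\mathrm{Li}_{k+1}(q^{\boldsymbol{n}\cdot\boldsymbol{\omega}+w}),$$
and the identity $(-1)^{k}(2\pi i/\tau')^{-k}=(\log q)^{-k}$ produces the claimed formula.

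The main technical obstacle is rigor for $\zeta_{r+1}(s,w;(\boldsymbol{\omega},-\tau'))$: since $-\tau'$ lacks positive real part, this object is defined only by analytic continuation, and the branch of $(a-m\tau')^{-s}$ must be fixed compatibly with the bilateral rewriting and with the Lipschitz formula. I would therefore first establish the displayed identity for $Z(s)$ on the half-plane $\mathrm{Re}(s)>r+1$, where absolute convergence justifies every interchange of summation, and then extend to $s=-k$ by analytic continuation, using that the right-hand side is entire in $s$ thanks to $1/\Gamma(s)$. Termwise differentiation at $s=-k$ is likewise justified by locally uniform convergence of the $\boldsymbol{n}$-series in a neighborhood.
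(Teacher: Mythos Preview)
Your proposal is correct and follows essentially the same route as the paper: rewrite the defining combination of zeta functions as a bilateral sum over $\mathbb{Z}$, apply the Lipschitz summation formula to obtain the factor $(-\log q)^{s}/\Gamma(s)$ times a polylogarithm-type series, and differentiate at $s=-k$ using that $1/\Gamma$ has a simple zero there. Your discussion of the $m=0$ cancellation and of the branch/convergence issues is more explicit than the paper's, but the argument is the same.
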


Additionally, we show that the $q$-multiple gamma functions satisfies the result of Kurokawa and Ochiai more concisely than the multiple gamma functions.
\begin{Thm}\label{ko}
We have
\begin{eqnarray*}\displaystyle\lim_{\mu_j\rightarrow{\infty}\atop{1\leq{j}\leq{l}}} \Gamma_{r+l,k}^q\left(w;\left({\boldsymbol{\omega}},\frac{{\boldsymbol{\alpha}}}{{\boldsymbol{\mu}}}\right)\right)^{|{\boldsymbol{\alpha}}|_{\times}/|{\boldsymbol{\mu}}|_{\times}}=\Gamma_{r,l+k}^q(w;{\boldsymbol{\omega}})^{\frac{(-1)^lk!}{(l+k)!}},\end{eqnarray*}
where $\displaystyle\frac{{\boldsymbol{\alpha}}}{{\boldsymbol{\mu}}}=\left(\frac{\alpha_1}{\mu_1},\cdots,\frac{\alpha_l}{\mu_l}\right)$.
\end{Thm}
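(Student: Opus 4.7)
The plan is to start from Shibukawa's polylogarithm expression (the theorem just preceding Theorem \ref{ko}) and carry out the limit by elementary geometric-series manipulations, sidestepping any direct use of the multiple gamma functions.

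After expanding $\mathrm{Li}_{k+1}(q^y)=\sum_{N\geq 1}q^{Ny}/N^{k+1}$ and summing the resulting geometric series over $\boldsymbol{m}\geq\boldsymbol{0}$ in the $l$ new variables, Shibukawa's formula for the left-hand side of Theorem \ref{ko} rewrites as
$$\log\Gamma_{r+l,k}^q\Bigl(w;\Bigl({\boldsymbol{\omega}},\frac{{\boldsymbol{\alpha}}}{{\boldsymbol{\mu}}}\Bigr)\Bigr)=\frac{k!}{(\log q)^k}\sum_{N\geq 1}\frac{1}{N^{k+1}}\sum_{{\bf n}\geq{\bf 0}}q^{N({\bf n}\cdot{\boldsymbol{\omega}}+w)}\prod_{j=1}^{l}\frac{1}{1-q^{N\alpha_j/\mu_j}}.$$
Multiplying by $|{\boldsymbol{\alpha}}|_{\times}/|{\boldsymbol{\mu}}|_{\times}=\prod_j(\alpha_j/\mu_j)$ and pairing each $\alpha_j/\mu_j$ with the corresponding geometric denominator, each matched factor $(\alpha_j/\mu_j)/(1-q^{N\alpha_j/\mu_j})$ tends to $-1/(N\log q)$ as $\mu_j\to\infty$, so the product of $l$ such factors contributes $(-1)^l/(N^l(\log q)^l)$ in the limit.

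Passing to the limit termwise yields $\frac{(-1)^lk!}{(\log q)^{k+l}}\sum_{{\bf n}\geq{\bf 0}}\mathrm{Li}_{k+l+1}(q^{{\bf n}\cdot{\boldsymbol{\omega}}+w})$, which by a second application of Shibukawa's formula, now to $\Gamma_{r,l+k}^q(w;{\boldsymbol{\omega}})$, is precisely $\frac{(-1)^lk!}{(l+k)!}\log\Gamma_{r,l+k}^q(w;{\boldsymbol{\omega}})$. Exponentiating finishes the theorem.

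The only genuine obstacle is to justify this termwise interchange of the $\mu_j$-limit with the $N$- and ${\bf n}$-summations. I would supply a dominating majorant by splitting into regimes: where $N|\log q|\alpha_j/\mu_j\lesssim 1$ the elementary bound $|1-q^\epsilon|\geq|\epsilon\log q|/2$ for small $\epsilon$ makes each factor $O(1/N)$ uniformly in $\mu_j$, while where $N|\log q|\alpha_j/\mu_j\gtrsim 1$ the denominator is uniformly bounded away from zero and $\alpha_j/\mu_j$ is itself bounded. Combined with the geometric decay of $q^{N({\bf n}\cdot{\boldsymbol{\omega}}+w)}$ in both $N$ and ${\bf n}$ (since $|q|<1$ and $\mathrm{Re}\,\omega_i>0$), this provides a summable majorant, and dominated convergence completes the argument.
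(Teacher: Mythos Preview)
Your proposal is correct and follows essentially the same route as the paper: both invoke Theorem~1.2, collapse the $l$ extra geometric series, and pass to the limit termwise via $\mu_j\bigl(1-q^{\alpha_j N/\mu_j}\bigr)\to-\alpha_j N\log q$. The only differences are cosmetic (the paper also sums the remaining $r$ geometric series into $\prod_i(1-q^{\omega_i n})^{-1}$ before taking the limit), and you add a dominated-convergence justification that the paper leaves implicit.
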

\section{Product expression of $\Gamma_{r,k}^q$}
In this section, we prove Theorem 1.2.
\begin{proof}
Let
\begin{eqnarray*}\displaystyle\zeta_r^q(s,w;{\boldsymbol{\omega}})=\zeta_{r+1}(s,w;({\boldsymbol{\omega}},\tau'))+\zeta_{r+1}(s,w;({\boldsymbol{\omega}},-\tau'))\\-\zeta_r(s,w;{\boldsymbol{\omega}}).\end{eqnarray*}

Then we show that
\begin{eqnarray*}\displaystyle\zeta_r^q(s,w;{\boldsymbol{\omega}})&=&\sum_{{\bf n}\geq{\bf 0}} \sum_{n\in\mathbb{Z}} ({\bf n}\cdot{\boldsymbol{\omega}}-n\tau'+w)^{-s}\\&=&(-\tau')^{-s}\sum_{{\bf n}\geq{\bf 0}}\sum_{n\in\mathbb{Z}} \left(n-\frac{1}{\tau'}({\bf n}\cdot{\boldsymbol{\omega}}+w)\right)^{-s}\\&=&\frac{(-\log q)^s}{\Gamma(s)}\sum_{{\bf n}\geq{{\bf 0}}}\sum_{n=1}^{\infty} n^{s-1}q^{n({\bf n}\cdot{\boldsymbol{\omega}}+w)},\end{eqnarray*}
where we used Lipschitz' formula
\begin{eqnarray*}\displaystyle\sum_{n\in\mathbb{Z}} (n+z)^{-s}=\frac{(-2\pi i)^s}{\Gamma(s)}\sum_{n=1}^{\infty} n^{s-1}e^{2\pi inz}.\end{eqnarray*}

Hence we get
\begin{eqnarray*}\displaystyle\Gamma^q_{r,k}(w;{\boldsymbol{\omega}})&=&\exp\biggl(\frac{\partial}{\partial s}\frac{(-\log q)^s}{\Gamma(s)}\sum_{{\bf n}\geq{{\bf 0}}}\sum_{n=1}^{\infty} n^{s-1}\biggr.\\&{}&\biggl.\times\biggl.q^{n({\bf n}\cdot{\boldsymbol{\omega}}+w)}\biggr|_{s=-k}\biggr)\\&=&\exp\biggl(\left.\left(\frac{\partial}{\partial s}\frac{(-\log q)^s}{\Gamma(s)}\right)\right|_{s=-k}\biggr.\\&{}&\times\biggl.\sum_{{\bf n}\geq{{\bf 0}}}\sum_{n=1}^{\infty} n^{-k-1}q^{n({\bf n}\cdot{\boldsymbol{\omega}}+w)}\biggr),\end{eqnarray*}
and we can easily get
\begin{eqnarray*}\displaystyle\lim_{s\rightarrow{-k}}\frac{d}{ds} \frac{a^s}{\Gamma(s)}=\frac{k!}{(-a)^k}.\end{eqnarray*}
\end{proof}
\begin{Cor}[\cite{tanaka}, Theorem 1]
We have
\begin{eqnarray*}\displaystyle\Gamma_r^q(w;{\boldsymbol{\omega}})=\prod_{{\bf n}\geq{\bf 0}}(1-q^{{\bf n}\cdot{\boldsymbol{\omega}}+w})^{-1}.\end{eqnarray*}
\end{Cor}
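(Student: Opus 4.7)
The plan is to derive the corollary as an immediate specialization of Theorem 1.2 at $k=0$, combined with the classical identity $\mathrm{Li}_1(x)=-\log(1-x)$.

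First I would observe that since $\Gamma_{r,0}=\Gamma_r$ by definition of the BM multiple gamma functions, the $q$-analogue satisfies $\Gamma_{r,0}^q=\Gamma_r^q$ directly from the definition of $\Gamma_{r,k}^q$. So it suffices to evaluate the right-hand side of Theorem 1.2 at $k=0$. The prefactor becomes $0!/(\log q)^0 = 1$, leaving
\begin{eqnarray*}
\Gamma_r^q(w;\boldsymbol{\omega})=\exp\left(\sum_{\bf n\geq \bf 0}\mathrm{Li}_1(q^{\bf n\cdot\boldsymbol{\omega}+w})\right).
\end{eqnarray*}

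Next I would use $\mathrm{Li}_1(x)=\sum_{n\geq 1}x^n/n=-\log(1-x)$, which is valid because $|q^{\bf n\cdot\boldsymbol{\omega}+w}|<1$ under the standing assumption that $\boldsymbol{\omega}$ and $w$ have positive real parts and $|q|<1$ (equivalently, $\mathrm{Re}(\tau')>0$ in a suitable sense so that $|q|<1$). Substituting and exchanging the exponential with the sum turns the infinite sum into the infinite product $\prod_{\bf n\geq\bf 0}(1-q^{\bf n\cdot\boldsymbol{\omega}+w})^{-1}$, which is exactly the claimed formula.

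No serious obstacle is expected: the only point to verify is convergence of the product, which follows from the geometric decay of $q^{\bf n\cdot\boldsymbol{\omega}+w}$ as $|\bf n|\to\infty$, justifying the interchange of $\exp$ and the sum over $\bf n$.
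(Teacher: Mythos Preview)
Your proposal is correct and follows exactly the approach implied by the paper: the corollary is stated immediately after Theorem~1.2 with no separate proof, so the intended argument is precisely to specialize to $k=0$ and use $\mathrm{Li}_1(x)=-\log(1-x)$.
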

\section{Period deformation and Raabe's formula}
We prove period deformation and Raabe's formula of $\Gamma^q_{r,k}$ in the similar form of \cite{pd}.
\begin{Prop}[Period deformation]
We have
$$\displaystyle\underbrace{\int_0^1\cdots\int_0^1}_{l} \log\Gamma^q_{r+l,k}(w+{\bf t}\cdot{\boldsymbol{\alpha}};({\boldsymbol{\omega}},{\boldsymbol{\alpha}}))\,d{\bf t}$$$$=\lim_{\mu_j\rightarrow{\infty}\atop{1\leq{j}\leq{l}}}\frac{1}{|{\boldsymbol{\mu}}|_{\times}}\log\Gamma_{r+l,k}\left(w;\left({\boldsymbol{\omega}},\frac{{\boldsymbol{\alpha}}}{{\boldsymbol{\mu}}}\right)\right).$$
\end{Prop}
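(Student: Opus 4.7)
The strategy is to convert the right-hand side into a Riemann sum for the integrand on the left, by first establishing a multiplication-type formula for $\log\Gamma^q_{r+l,k}$ that governs the refinement of the periods ${\boldsymbol{\alpha}}$ to ${\boldsymbol{\alpha}}/{\boldsymbol{\mu}}$.

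First, I would work at the level of the Barnes zeta function. In the region of convergence $\mathrm{Re}(s)>r+l$, writing each index $n_{r+j}\geq 0$ uniquely as $n_{r+j}=\mu_j p_j+q_j$ with $p_j\geq 0$ and $0\leq q_j<\mu_j$ and regrouping the defining series yields
\begin{eqnarray*}
\zeta_{r+l}(s,w;({\boldsymbol{\omega}},{\boldsymbol{\alpha}}/{\boldsymbol{\mu}}))=\sum_{{\bf q}:\,0\leq q_j<\mu_j}\zeta_{r+l}(s,w+({\bf q}/{\boldsymbol{\mu}})\cdot{\boldsymbol{\alpha}};({\boldsymbol{\omega}},{\boldsymbol{\alpha}})).
\end{eqnarray*}
Since both sides extend to $s\in\mathbb{C}$ as meromorphic functions that are holomorphic at $s=-k$, the identity persists after differentiation in $s$ and evaluation at $s=-k$, giving the analogous multiplication formula for $\log\Gamma_{r+l,k}$. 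Applying it to each of the three factors $\Gamma_{r+l+1,k}(\,\cdot\,;({\boldsymbol{\omega}},{\boldsymbol{\alpha}}/{\boldsymbol{\mu}},\pm\tau'))$ and $\Gamma_{r+l,k}(\,\cdot\,;({\boldsymbol{\omega}},{\boldsymbol{\alpha}}/{\boldsymbol{\mu}}))$ that combine to form $\Gamma^q_{r+l,k}$ (the entries $\pm\tau'$ are untouched by the splitting) gives
\begin{eqnarray*}
\log\Gamma^q_{r+l,k}(w;({\boldsymbol{\omega}},{\boldsymbol{\alpha}}/{\boldsymbol{\mu}}))=\sum_{0\leq q_j<\mu_j}\log\Gamma^q_{r+l,k}(w+({\bf q}/{\boldsymbol{\mu}})\cdot{\boldsymbol{\alpha}};({\boldsymbol{\omega}},{\boldsymbol{\alpha}})).
\end{eqnarray*}

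Dividing by $|{\boldsymbol{\mu}}|_{\times}=\mu_1\cdots\mu_l$ converts the right-hand side into the Riemann sum for ${\bf t}\mapsto\log\Gamma^q_{r+l,k}(w+{\bf t}\cdot{\boldsymbol{\alpha}};({\boldsymbol{\omega}},{\boldsymbol{\alpha}}))$ on $[0,1]^l$ along the uniform grid of spacing $1/\mu_j$, and letting $\mu_j\to\infty$ produces the desired integral, since the integrand is continuous in ${\bf t}$ by the polylogarithm series of Theorem 1.2. For consistency with the $l=0$ case of the statement (where the identity would otherwise read $\log\Gamma^q_{r,k}=\log\Gamma_{r,k}$), I read the displayed $\Gamma_{r+l,k}$ on the right-hand side of the proposition as $\Gamma^q_{r+l,k}$, interpreting the missing $q$ as a typographical slip.

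The main technical step is the analytic-continuation argument in the $\zeta_{r+l}$ multiplication formula: one must verify that the identity of series in the half-plane of convergence persists as a meromorphic identity on $\mathbb{C}$, with both sides regular at $s=-k$ so that $\partial_s$ commutes with the finite sum over ${\bf q}$. This is routine since the poles of Barnes zeta lie at positive integers, and once it is in hand the remainder of the proof reduces to a standard Riemann-sum passage to the limit.
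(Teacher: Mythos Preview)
Your proposal is correct and follows essentially the same route as the paper: both arguments rest on the index-splitting identity $n_{r+j}=\mu_j p_j+q_j$ to recognize $\zeta$ with refined periods ${\boldsymbol{\alpha}}/{\boldsymbol{\mu}}$ as a finite sum of shifted $\zeta$'s with periods ${\boldsymbol{\alpha}}$, and then pass to the Riemann integral after dividing by $|{\boldsymbol{\mu}}|_{\times}$ and differentiating at $s=-k$. The only cosmetic differences are that the paper runs the argument directly for $\zeta^q_{r+l}$ (rather than for each Barnes factor separately, as you do) and proceeds from the integral side toward the limit side, whereas you go in the opposite direction; your observation that the right-hand side should read $\Gamma^q_{r+l,k}$ is also consistent with the paper's own computation.
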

\begin{proof}
We obtain 
$$\displaystyle\int_0^1\cdots\int_0^1 \zeta^q_{r+l,k}(w+{\bf t}\cdot{\boldsymbol{\alpha}};({\boldsymbol{\omega}},{\boldsymbol{\alpha}}))\,d{\bf t}$$\begin{eqnarray*}\displaystyle&=&\lim_{\mu_j\rightarrow{\infty}\atop{1\leq{j}\leq{l}}}\frac{1}{|{\boldsymbol{\mu}}|_{\times}}\sum_{k_j=0\atop{1\leq{j}\leq{l}}}^{\mu_j-1}\zeta^q_{r+l,k}\left(w+\frac{{\bf k}\cdot{\boldsymbol{\alpha}}}{{\boldsymbol{\mu}}};({\boldsymbol{\omega}},{\boldsymbol{\alpha}})\right)\\&=&\lim_{\mu_j\rightarrow{\infty}\atop{1\leq{j}\leq{l}}}\frac{1}{|{\boldsymbol{\mu}}|_{\times}}\sum_{k_j=0\atop{1\leq{j}\leq{l}}}^{\mu_j-1} \sum_{{\bf n}\geq{\bf 0}}\sum_{{\bf m}\geq{\bf 0}}\sum_{n\in\mathbb{Z}} \biggl({\bf n}\cdot{\boldsymbol{\omega}}+{\bf m}\cdot{\boldsymbol{\alpha}}\biggr.\end{eqnarray*}$$\biggl.-n\tau'+w+\frac{{\bf k}\cdot{\boldsymbol{\alpha}}}{{\boldsymbol{\mu}}}\biggr)^{-s}.$$
Here, by substituting $\mu_jm+k_j=\nu_j$, we have
$$\int_0^1\cdots\int_0^1 \zeta^q_{r+l,k}(w+{\bf t}\cdot{\boldsymbol{\alpha}};({\boldsymbol{\omega}},{\boldsymbol{\alpha}}))\,d{\bf t}$$\begin{eqnarray*}\displaystyle&=&\lim_{\mu_j\rightarrow{\infty}\atop{1\leq{j}\leq{l}}}\frac{1}{|{\boldsymbol{\mu}}|_{\times}}\sum_{{\bf n}\geq{\bf 0}}\sum_{{\bf m}\geq{\bf 0}}\sum_{n\in\mathbb{Z}} \left({\bf n}\cdot{\boldsymbol{\omega}}+\frac{{\boldsymbol{\nu}}\cdot{\boldsymbol{\alpha}}}{{\boldsymbol{\mu}}}\right.\biggl.-n\tau'+w\biggr)^{-s}\\&=&\lim_{\mu_j\rightarrow{\infty}\atop{1\leq{j}\leq{l}}} \frac{1}{|{\boldsymbol{\mu}}|_{\times}}\zeta_{r+l}\left(s,w;\left({\boldsymbol{\omega}},\frac{{\boldsymbol{\alpha}}}{{\boldsymbol{\mu}}}\right) \right).\end{eqnarray*}
Thus the statement is given by the differentiation at $s=-k$.
\end{proof}
\begin{Thm}[Raabe's formula]\label{qgrf}
We have
\begin{eqnarray*}\displaystyle\underbrace{\int_0^1\cdots\int_0^1}_{l} \log\Gamma^q_{r+l,k}(w+{\bf t}\cdot{\boldsymbol{\alpha}};({\boldsymbol{\omega}},{\boldsymbol{\alpha}}))\,d{\bf t}\\=\frac{(-1)^lk!}{|{\boldsymbol{\alpha}}|_{\times}(l+k)!}\biggl(\log\Gamma^q_{r,l+k}(w;{\boldsymbol{\omega}})\biggr.\qquad\qquad\qquad\\+\left.\left(\frac{1}{k+1}+\cdots+\frac{1}{k+l}\right)\zeta^q_r(-k-l,w;{\boldsymbol{\omega}})\right).\end{eqnarray*}
\end{Thm}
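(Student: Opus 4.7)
The plan is to reduce Theorem 3.2 to the Kurokawa--Wakayama period deformation/Raabe formula stated in the Introduction, using Proposition 3.1 as a bridge and exploiting that both $\Gamma^q_{r,k}$ and $\zeta^q_r$ are defined as the same three-term linear combinations of $\Gamma_{\cdot,k}$ and $\zeta_{\cdot}$ respectively.

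First I would apply Proposition 3.1 to rewrite the left-hand side as
$$\lim_{\mu_j\to\infty\atop 1\leq j\leq l}\frac{1}{|{\boldsymbol\mu}|_{\times}}\log\Gamma^q_{r+l,k}\left(w;\left({\boldsymbol\omega},\frac{{\boldsymbol\alpha}}{{\boldsymbol\mu}}\right)\right).$$
Then I would split the logarithm, using the definition of $\Gamma^q_{r+l,k}$, into the signed sum of $\log\Gamma_{r+l+1,k}(w;({\boldsymbol\omega},\tau',{\boldsymbol\alpha}/{\boldsymbol\mu}))$, $\log\Gamma_{r+l+1,k}(w;({\boldsymbol\omega},-\tau',{\boldsymbol\alpha}/{\boldsymbol\mu}))$, and $-\log\Gamma_{r+l,k}(w;({\boldsymbol\omega},{\boldsymbol\alpha}/{\boldsymbol\mu}))$. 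The $l$ perturbed periods ${\boldsymbol\alpha}/{\boldsymbol\mu}$ appear in all three terms in exactly the same way, so I can apply the Kurokawa--Wakayama formula from the Introduction to each term separately, after dividing by $|{\boldsymbol\alpha}|_{\times}$ so that the prefactor becomes $1/|{\boldsymbol\mu}|_{\times}$ to match the one coming from Proposition 3.1. For the first two terms the base period vector is $({\boldsymbol\omega},\pm\tau')$ and the base order is $r+1$; for the third term the base period vector is ${\boldsymbol\omega}$ and the base order is $r$.

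Each such application produces the same universal prefactor $(-1)^l k!/((l+k)!|{\boldsymbol\alpha}|_{\times})$, a $\log\Gamma_{\cdot,l+k}$ piece, and $\left(\frac{1}{k+1}+\cdots+\frac{1}{k+l}\right)\zeta_{\cdot}(-k-l,w;\cdot)$. Summing the three $\log\Gamma_{\cdot,l+k}$ pieces reproduces $\log\Gamma^q_{r,l+k}(w;{\boldsymbol\omega})$ by the definition of $\Gamma^q$, and summing the three $\zeta_{\cdot}(-k-l,w;\cdot)$ pieces reproduces $\zeta^q_r(-k-l,w;{\boldsymbol\omega})$ by the definition of $\zeta^q$ already used in Section 2. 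This yields the right-hand side.

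I do not expect a genuine obstacle: the argument is essentially linearity, together with the fact that the limit over ${\boldsymbol\mu}$ passes through the finite three-term combination. The only point worth recording is that the constants $(-1)^l k!/(l+k)!$ and $\frac{1}{k+1}+\cdots+\frac{1}{k+l}$ in the Kurokawa--Wakayama formula depend only on $l$ and $k$, not on the base period vector or base order, so they appear identically in all three applications and factor out cleanly.
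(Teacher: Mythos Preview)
Your argument is correct, but it differs from the paper's. The paper does not route the proof through Proposition~3.1 or the Kurokawa--Wakayama limit formula; instead it works directly at the level of $\zeta^q$, establishing for $\mathrm{Re}(s)$ large the identity
\[
\int_0^{\alpha_1}\!\!\cdots\!\int_0^{\alpha_l}\zeta^q_{r+l}(s,w+|{\bf t}|;({\boldsymbol{\omega}},{\boldsymbol{\alpha}}))\,d{\bf t}=\frac{\zeta^q_r(s-l,w;{\boldsymbol{\omega}})}{\prod_{i=1}^l(s-i)}
\]
by peeling off one variable at a time via the ladder relation $\zeta^q_r(s,w+\omega_i;{\boldsymbol{\omega}})=\zeta^q_r(s,w;{\boldsymbol{\omega}})-\zeta^q_{r-1}(s,w;{\boldsymbol{\omega}}\langle i\rangle)$ from \cite{tanaka}, and then differentiating both sides at $s=-k$. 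Your approach is shorter and makes explicit that the $q$-Raabe formula is simply the three-term linear combination of the classical one, with the same universal constants $(-1)^lk!/(l+k)!$ and $\sum_{j=1}^l 1/(k+j)$, so no new computation is required once \cite{pd} is granted. The paper's approach, by contrast, is more self-contained and yields the displayed zeta identity at general $s$, which has some independent interest. One minor caveat in your route: you invoke the Kurokawa--Wakayama formula with base period vectors $({\boldsymbol{\omega}},\pm\tau')$, and $\tau'$ and $-\tau'$ cannot both satisfy the positive-real-part hypothesis stated in the Introduction; this is resolved by analytic continuation and is the same convention the paper itself relies on throughout, so it is not a genuine gap.
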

\begin{proof}
We plan to prove is similar to that of \cite{pd}, Theorem 4. We only have to show 
\begin{eqnarray*}\displaystyle\int_0^{\alpha_1}\cdots\int_0^{\alpha_l} \zeta^q_{r+l}(s,w+|{\bf t}|;({\boldsymbol{\omega}},{\boldsymbol{\alpha}}))\,d{\bf t}=\frac{\zeta^q_r(s-l,w;{\boldsymbol{\omega}})}{\prod_{i=1}^l (s-i)}\end{eqnarray*}
when $\mathrm{Re}(s)>-r-l$ because both sides are meromorphically extendable on the whole plane $\mathbb{C}$. Moreover, it is sufficient that we get
\begin{eqnarray*}\displaystyle\int_0^{\alpha_l} \zeta^q_{r+l}(s,w+|{\bf t}|;({\boldsymbol{\omega}},{\boldsymbol{\alpha}}))\,dt_l=\frac{\zeta^q_{r+l-1}(s-1,w+|{\bf t}\langle{l}\rangle|;({\boldsymbol{\omega}};{\boldsymbol{\alpha}}\langle{l}\rangle))}{s-1},\end{eqnarray*}
where ${\boldsymbol{\alpha}}\langle{l}\rangle$ means $(\alpha_1,\cdots,\alpha_{l-1})$. Thus we show this. By substitution $w+|{\bf t}|\mapsto{z}$, we get
$$\displaystyle\int_0^{\alpha_l} \zeta^q_{r+l}(s,w+|{\bf t}|;({\boldsymbol{\omega}},{\boldsymbol{\alpha}}))\,dt_l$$\begin{eqnarray*}\displaystyle&=&\int_{w+|{\bf t}\langle{l}\rangle|}^{w+|{\bf t}\langle{l}\rangle|+\alpha_l} \zeta^q_{r+l}(s,z;({\boldsymbol{\omega}},{\boldsymbol{\alpha}}))\,dz\\&=&\int_{w+|{\bf t}\langle{l}\rangle|}^{\infty} \zeta^q_{r+l}(s,z;({\boldsymbol{\omega}},{\boldsymbol{\alpha}}))\,dz-\int_{w+|{\bf t}\langle{l}\rangle|+\alpha_l}^{\infty} \zeta^q_{r+l}(s,z;({\boldsymbol{\omega}},{\boldsymbol{\alpha}}))\,dz\\&=&\int_{w+|{\bf t}\langle{l}\rangle|}^{\infty} \zeta^q_{r+l-1}(s,z;({\boldsymbol{\omega}},{\boldsymbol{\alpha}}\langle{l}\rangle))\,dz.\end{eqnarray*}
where we used the ladder structure of the $q$-multiple Hurwitz' zeta functions
\begin{eqnarray*}\displaystyle\zeta^q_r(s,w+\omega_i;{\boldsymbol{\omega}})=\zeta^q_r(s,w;{\boldsymbol{\omega}})-\zeta^q_{r-1}(s,w;{\boldsymbol{\omega}}\langle{i}\rangle)\end{eqnarray*}
which was proved in \cite{tanaka}, Proposition 2.1. Therefore we obtain
$$\int_0^{\alpha_l} \zeta^q_{r+l}(s,w+|{\bf t}|;({\boldsymbol{\omega}},{\boldsymbol{\alpha}}))\,dt_l$$\begin{eqnarray*}\displaystyle&=&\sum_{{\bf n}\geq{\bf 0}}\sum_{{\bf m}\langle{l}\rangle\geq{\bf 0}}\sum_{n\in\mathbb{Z}}\int_{w+|{\bf t}\langle{l}\rangle|}^{\infty} ({\bf n}\cdot{\boldsymbol{\omega}}+{\bf m}\langle{l}\rangle\cdot{\boldsymbol{\alpha}}\langle{l}\rangle\\&{}&\qquad\qquad-n\tau'+w)^{-s}\\&=&\frac{\zeta^q_{r+l-1}(s-1,w+|{\bf t}\langle{l}\rangle|;({\boldsymbol{\omega}};{\boldsymbol{\alpha}}\langle{l}\rangle))}{s-1}.\end{eqnarray*}

From the above, we get the desired result inductively.
\end{proof}
\section{Proof of Theorem 1.3 and corollaries}
In this section, we prove Theorem 1.3 and show its applications.
\begin{proof}
It follows that
$$\lim_{\mu_j\rightarrow{\infty}\atop{1\leq{j}\leq{l}}}\frac{1}{|{\boldsymbol{\mu}}|_{\times}}\log\Gamma^q_{r+l,k}\left(w;\left({\boldsymbol{\omega}},\frac{{\boldsymbol{\alpha}}}{{\boldsymbol{\mu}}}\right)\right)$$\begin{eqnarray*}\displaystyle&=&\lim_{\mu_j\rightarrow{\infty}\atop{1\leq{j}\leq{l}}}\frac{1}{|{\boldsymbol{\mu}}|_{\times}}\frac{k!}{(\log q)^{k}}\\&{}&\times\sum_{n=1}^{\infty}\frac{q^{nw}}{n^{k+1}\prod_{i=1}^r (1-q^{\omega_in})\prod_{j=1}^l (1-q^{\alpha_jn/\mu_j})}\\&=&\frac{(-1)^lk!}{|{\boldsymbol{\alpha}}|_{\times}(\log q)^{l+k}}\sum_{n=1}^{\infty}\frac{q^{nw}}{n^{k+l+1}\prod_{i=1}^r(1-q^{\omega_in})},\end{eqnarray*}
since
\begin{eqnarray*}\displaystyle\lim_{\mu_j\rightarrow{\infty}} \mu_j\left(1-q^{\frac{\alpha_jn}{\mu_j}}\right)&=&\lim_{\mu_j\rightarrow{\infty}} \mu_j\left(1-\sum_{\nu=0}^{\infty} \frac{(\frac{\alpha_jn\log q}{\mu_j})^{\nu}}{\nu!}\right)\\&=&-\alpha_jn\log q.\end{eqnarray*}
Therefore we obtain the statement.
\end{proof}

This theorem has some applications. In the following corollaries, we see two of them. The first is a proof of the transformation property (or the modular property) of Dedekind's eta function:
\begin{eqnarray*}\displaystyle\eta\left(-\frac{1}{\tau}\right)=\sqrt{\frac{\tau}{i}}\eta(\tau).\end{eqnarray*}
This can be derived from the following corollary which is found in \cite{tanaka}.
\begin{Cor}[\cite{tanaka}, Theorem 2]
We have
\begin{eqnarray*}\displaystyle\rho^q_r({\boldsymbol{\omega}})=-\log q\prod_{{\bf n}\geq{\bf 0}\atop{{\bf n}\neq{{\bf 0}}}} (1-q^{{\bf n}\cdot{\boldsymbol{\omega}}}).\end{eqnarray*}
\end{Cor}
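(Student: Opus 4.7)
The plan is to read off $\rho^q_r(\boldsymbol{\omega})$ from the leading behavior of $\Gamma^q_r(w;\boldsymbol{\omega})$ at $w=0$, starting from the infinite product expression of Corollary 2.2. Following the convention of \cite{tanaka}, $\rho^q_r(\boldsymbol{\omega})$ is the reciprocal of the coefficient that extracts the simple pole of $\Gamma^q_r(w;\boldsymbol{\omega})$ at $w=0$; equivalently, $\rho^q_r(\boldsymbol{\omega}) = \lim_{w\to 0}\bigl(w\,\Gamma^q_r(w;\boldsymbol{\omega})\bigr)^{-1}$.

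First I would invoke Corollary 2.2 to expand $\Gamma^q_r(w;\boldsymbol{\omega})^{-1}$ as the infinite product $\prod_{\mathbf{n}\geq \mathbf{0}}(1 - q^{\mathbf{n}\cdot\boldsymbol{\omega}+w})$. Since $\mathrm{Re}(\omega_i)>0$ and $|q|<1$, the factor with $\mathbf{n}=\mathbf{0}$ is the only one that vanishes at $w=0$, while every other factor stays bounded away from $0$ and from $\infty$ on a suitable neighborhood of $0$. Peeling off the exceptional factor yields
$$\Gamma^q_r(w;\boldsymbol{\omega})^{-1} = (1-q^w)\prod_{\substack{\mathbf{n}\geq \mathbf{0}\\ \mathbf{n}\neq\mathbf{0}}}(1-q^{\mathbf{n}\cdot\boldsymbol{\omega}+w}).$$

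Next, I divide both sides by $w$ and let $w\to 0$. The elementary expansion $1 - q^w = -w\log q + O(w^2)$ gives $(1-q^w)/w \to -\log q$, and the remaining infinite product converges uniformly on a neighborhood of $w=0$ (the tail is controlled by the exponential decay of $|q^{\mathbf{n}\cdot\boldsymbol{\omega}}|$ in $|\mathbf{n}|$ coming from $\mathrm{Re}(\omega_i)>0$), so it tends to $\prod_{\mathbf{n}\neq\mathbf{0}}(1-q^{\mathbf{n}\cdot\boldsymbol{\omega}})$. Multiplying the two limits produces the claimed formula.

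The main technical point is not the limit computation itself but rather matching the residue-type definition of $\rho^q_r$ used above with the convention adopted in \cite{tanaka}; this amounts to a normalization check (for instance, verifying compatibility with the $q\to 1$ limit, which recovers Barnes' multiple constant $\rho_r$) rather than any substantive difficulty. Once the normalization is pinned down, the entire proof reduces to the two-line manipulation above, and the formula can then be fed into the promised derivation of the modular transformation of $\eta$.
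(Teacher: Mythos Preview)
Your argument is correct, and in fact the paper does not supply its own proof of this corollary at all: it simply quotes the statement from \cite{tanaka} and remarks that it generalizes Shintani's result. The placement in Section~4 under ``applications of Theorem~1.3'' hints that the author regards it as reachable through the period-deformation machinery, but no such derivation is written out.

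Your route, by contrast, bypasses Theorem~1.3 entirely and works directly from the infinite product of Corollary~2.2, isolating the $\mathbf{n}=\mathbf{0}$ factor and reading off the leading coefficient at $w=0$. This is the most direct argument available once Corollary~2.2 is in hand, and it is essentially how such residue constants are computed in \cite{tanaka} itself. The only substantive point you flag---matching your limit definition $\rho^q_r(\boldsymbol{\omega})=\lim_{w\to 0}(w\,\Gamma^q_r(w;\boldsymbol{\omega}))^{-1}$ with Tanaka's convention---is indeed just a normalization check, and your computation shows it is consistent with the stated formula. Nothing is missing.
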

This is the generalization of Shintani's result \cite{shin}, Proposition 2 (1).\\

The second is the vanishing property of $\zeta^q_r(s,w;{\boldsymbol{\omega}})$. By comparing Theorem \ref{ko} and Theorem \ref{qgrf}, we get the following:
\begin{Cor}[\cite{shib}, Corollary 4.8 (2)]
For $n\geq{0}$, we have
\begin{eqnarray*}\displaystyle\zeta_r^q(-n,w;{\boldsymbol{\omega}})=0.\end{eqnarray*}
\end{Cor}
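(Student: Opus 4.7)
The plan is to evaluate the multiple integral
$$\underbrace{\int_0^1\cdots\int_0^1}_{l}\log\Gamma^q_{r+l,k}(w+{\bf t}\cdot{\boldsymbol{\alpha}};({\boldsymbol{\omega}},{\boldsymbol{\alpha}}))\,d{\bf t}$$
in two different ways and compare the results. Raabe's formula (Theorem \ref{qgrf}) computes $|{\boldsymbol{\alpha}}|_\times$ times this integral directly as
$$\frac{(-1)^lk!}{(l+k)!}\left(\log\Gamma^q_{r,l+k}(w;{\boldsymbol{\omega}})+\left(\frac{1}{k+1}+\cdots+\frac{1}{k+l}\right)\zeta^q_r(-k-l,w;{\boldsymbol{\omega}})\right).$$
Separately, Proposition 3.1 rewrites the same integral as a limit in ${\boldsymbol{\mu}}$, and multiplying by $|{\boldsymbol{\alpha}}|_\times$ and invoking Theorem \ref{ko} collapses the limit to $\frac{(-1)^lk!}{(l+k)!}\log\Gamma^q_{r,l+k}(w;{\boldsymbol{\omega}})$ alone, with no zeta correction.

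Equating these two expressions cancels the common $\log\Gamma^q_{r,l+k}$ contribution and leaves
$$\left(\frac{1}{k+1}+\cdots+\frac{1}{k+l}\right)\zeta^q_r(-k-l,w;{\boldsymbol{\omega}})=0.$$
Since the harmonic coefficient is a strictly positive rational whenever $l\geq 1$, this yields $\zeta^q_r(-k-l,w;{\boldsymbol{\omega}})=0$ for every $k\geq 0$ and $l\geq 1$. Choosing $k=0$ and letting $l$ range over $1,2,3,\ldots$ already covers every $n\geq 1$ in the statement.

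The boundary case $n=0$ is not reached by the comparison, but it follows immediately from the identity
$$\zeta^q_r(s,w;{\boldsymbol{\omega}})=\frac{(-\log q)^s}{\Gamma(s)}\sum_{{\bf n}\geq{\bf 0}}\sum_{n=1}^{\infty}n^{s-1}q^{n({\bf n}\cdot{\boldsymbol{\omega}}+w)}$$
established en route to Theorem 1.2, since the double sum is entire in $s$ (the geometric decay of $q^n$ dominates the polynomial factor $n^{s-1}$) while $1/\Gamma(s)$ vanishes at every non-positive integer. The one thing to be careful about is the bookkeeping in the comparison step: the constants $\frac{(-1)^lk!}{(l+k)!}$ produced on each side must match exactly so that the $\log\Gamma^q_{r,l+k}$ terms cancel cleanly and isolate the zeta value; the way Theorems \ref{ko} and \ref{qgrf} are stated makes this automatic, so I expect no real obstacle beyond checking these numerical factors.
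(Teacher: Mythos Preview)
Your approach is correct and matches the paper's: compare Theorem~\ref{ko} with Theorem~\ref{qgrf} (linked through Proposition~3.1) and use that the harmonic coefficient $\frac{(-1)^lk!}{|{\boldsymbol{\alpha}}|_\times(k+l)!}\bigl(\tfrac{1}{k+1}+\cdots+\tfrac{1}{k+l}\bigr)$ is nonzero. You are in fact more careful than the paper at the boundary value $n=0$, which the comparison argument cannot reach (it needs $l\geq 1$) and which the paper's one-line proof passes over; your appeal to the $1/\Gamma(s)$ factor from the proof of Theorem~1.2 handles that case cleanly and indeed would suffice for all $n\geq 0$ on its own.
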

\begin{proof}
It is easy to see that
\begin{eqnarray*}\displaystyle\frac{(-1)^lk!}{|{\boldsymbol{\alpha}}|_{\times}(k+l)!}\left(\frac{1}{k+1}+\cdots+\frac{1}{k+l}\right)\neq{0}.\end{eqnarray*}
\end{proof}


\begin{thebibliography}{9}
\bibitem{barnes} E.~W.~Barnes, \emph{On the theory of the multiple gamma functions}, Trans.~Cambridge Philos.~Soc.~\textbf{19} (1904), 374--425.
\bibitem{ja} F.~H.~Jackson, \emph{The basic gamma function and the elliptic functions}, Proc.~Roy.~Soc.~London.~A \textbf{76} (1905), 127--144.
\bibitem{kogi} N.~Kurokawa, \emph{Multiple Sine Functions}, Lecture Notes in Japanese. Lectures delivered at Tokyo University, notes taken by S.~ Koyama (1991).
\bibitem{ki}N.~Kurokawa and H.~Ochiai, \emph{Generalized Kinkelin's formula}, Kodai Math.~J.~\textbf{30} (2007), 195--212.
\bibitem{pd} N.~Kurokawa, M.~Wakayama, \emph{Period deformations and Raabe's formulas for generalized gamma and sine functions}, Kyushu J.~Math.~\textbf{62} (2008), 171--187.
\bibitem{shin} T.~Shintani, \emph{A proof of the classical Kronecker limit formula}, Tokyo J.~Math.~\textbf{3} (1980), 191--199.
\bibitem{shib} G.~Shibukawa, \emph{Bilateral zeta functions and their applications}, Kyushu J.~Math.~\textbf{67} (2013), 429--451.
\bibitem{tanaka} H.~Tanaka, \emph{Multiple gamma functions, multiple sine functions, and Appell's O-functions}, Ramanujan J.~\textbf{24} (2011), 33--60.
\end{thebibliography}
\end{document}